\newtheorem{theorem}{Theorem}
\newtheorem{lemma}[theorem]{Lemma}
\newtheorem{corollary}[theorem]{Corollary}
\title{A number theoretic question arising in the geometry of plane curves and in billiard dynamics}
\author{Van Cyr}
\address{Department of Mathematics, Northwestern University, Evanston, IL 60208 USA}
\email{cyr@math.northwestern.edu}
\date{\today}
\subjclass[2010]{11R18, 53A04, 37E99}
\keywords{Cyclotomic field, bicycle curve, mathematical billiards}
\thanks{}
\begin{document}
\begin{abstract}
We prove that if $\rho\neq1/2$ is a rational number between zero and one, then there is no integer $n>1$ such that
$$
n\tan(\pi\rho)=\tan(n\pi\rho).
$$
This has interpretations both in the theory of bicycle curves and that of mathematical billiards.
\end{abstract}

\maketitle

\section{Introduction}
A closed plane curve $\Gamma:S^1\to\mathbb{R}^2$ of perimeter length $2\pi$ is called a {\em bicycle curve} (of rotation number $\rho$) if $\|\Gamma(t+\rho)-\Gamma(t)\|$ is constant for all $t$ (see the end note in~\cite{T} for a list of papers dealing with bicycle curves).

A theorem of Tabachnikov \cite[Theorem 7]{T} says that the circle admits a non-trivial infinitesimal deformation as a smooth plane bicycle curve of rotation number $\rho$ if and only if $\rho$ is a root of the equation $n\tan(\pi\rho)=\tan(n\pi\rho)$ for some integer $n\geq2$.  Recently, E. Gutkin~\cite{G1} conjectured our Theorem~\ref{mainthm} and showed that it implies that certain billiard maps act like irrational rotations.

We study this equation in the case that $\rho\in\mathbb{Q}$ to determine for which $\rho$ the circle is rigid as a bicycle curve.

A similar trigonometric equation was obtained by Tabachnikov to determine the rigidity of the polynomial analog of a bicycle curve (an ($n,k$)-bicycle polygon; see~\cite{T}).  In~\cite{CC}, R. Connelly and B. Csik\'os studied solutions to the equation and classified the first-order flexible bicycle polygons.

\section{Statement of Results}

Our main result is the following.

\begin{theorem}\label{mainthm}
If $\rho\in(0,1)\cap\mathbb{Q}\setminus\{\frac{1}{2}\}$, then there is no integer $n>1$ such that
$$
n\tan(\pi\rho)=\tan(n\pi\rho).
$$
\end{theorem}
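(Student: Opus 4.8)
The plan is to convert the trigonometric equation into the statement that a fixed integer polynomial vanishes at a primitive root of unity, and then to exploit the Galois action on roots of unity together with a product (norm) computation in a cyclotomic field. Writing $\omega=e^{2\pi i\rho}$ and using $\tan\theta=\tfrac{1}{i}\cdot\tfrac{e^{2i\theta}-1}{e^{2i\theta}+1}$, clearing denominators (legitimate since $\omega\neq-1$ as $\rho\neq\tfrac12$) turns $n\tan(\pi\rho)=\tan(n\pi\rho)$ into $P_n(\omega)=0$, where
$$
P_n(x)=(n-1)x^{n+1}-(n+1)x^{n}+(n+1)x-(n-1).
$$
Since a direct substitution gives $P_n(\omega)=2(\omega+1)\neq0$ whenever $\omega^n=-1$, the vanishing $P_n(\omega)=0$ already forces $\omega^n\neq-1$, so the original equation is in fact \emph{equivalent} to $P_n(\omega)=0$. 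Writing $\rho=p/q$ in lowest terms forces $q\geq3$, and then $\omega$ is a primitive $q$-th root of unity; it suffices to prove $P_n(\zeta)\neq0$ for every primitive $q$-th root of unity $\zeta$ and every integer $n\geq2$.

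Next I would group $P_n(\omega)=(n-1)(\omega^{n+1}-1)-(n+1)\,\omega(\omega^{n-1}-1)$ and substitute $\omega^m-1=2i\sin(m\pi\rho)\,e^{im\pi\rho}$: both grouped terms become real multiples of the single nonzero number $2ie^{i(n+1)\pi\rho}$, so $P_n(\omega)=0$ is equivalent to the real identity $(n-1)\sin((n+1)\pi\rho)=(n+1)\sin((n-1)\pi\rho)$. Because $P_n\in\mathbb{Z}[x]$, every Galois conjugate $e^{2\pi ij/q}$ of $\omega$ (with $\gcd(j,q)=1$) is again a root, so that identity holds with $\rho$ replaced by $j/q$ for every such $j$. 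If $q\mid n+1$ then every left side vanishes, which forces $q\mid n-1$, hence $q\mid2$, contradicting $q\geq3$; symmetrically $q\nmid n-1$. So $q\nmid n\pm1$, and I would multiply together all $\varphi(q)$ of the resulting absolute-value equations. To evaluate the products I would use $\prod_{\gcd(j,q)=1}2\,|\sin(\pi j/q)|=|\Phi_q(1)|$ ($\Phi_q$ the $q$-th cyclotomic polynomial) together with its corollary that, for $q\nmid m$ and $q_m:=q/\gcd(m,q)$,
$$
\prod_{\gcd(j,q)=1}|\sin(m\pi j/q)|=\bigl(|\Phi_{q_m}(1)|/2^{\varphi(q_m)}\bigr)^{\varphi(q)/\varphi(q_m)},
$$
which follows by reducing $m/q$ to lowest terms $m'/q_m$, noting that $|\sin(m'j\pi/q_m)|$ depends only on $j\bmod q_m$, that $(\mathbb{Z}/q)^\times\to(\mathbb{Z}/q_m)^\times$ is $\tfrac{\varphi(q)}{\varphi(q_m)}$-to-one, and that $a\mapsto m'a$ permutes $(\mathbb{Z}/q_m)^\times$. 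With $q_1:=q/\gcd(n+1,q)$ and $q_2:=q/\gcd(n-1,q)$, multiplying the $\varphi(q)$ equations and taking a $\varphi(q)$-th root (all quantities positive) yields the clean relation $(n-1)\,|\Phi_{q_1}(1)|^{1/\varphi(q_1)}=(n+1)\,|\Phi_{q_2}(1)|^{1/\varphi(q_2)}$.

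For the endgame, recall that $\Phi_m(1)$ equals the prime $p$ when $m$ is a power of $p$ and equals $1$ otherwise, so each $c_i:=|\Phi_{q_i}(1)|^{1/\varphi(q_i)}$ lies in $[1,2]$, equals $2$ only for $q_i=2$, and is rational only when $q_i=2$ or $q_i$ is not a prime power. The relation just obtained forces $c_1/c_2=(n+1)/(n-1)\in\mathbb{Q}$ with $c_1>c_2\geq1$; hence $q_1$ is a prime power and $(n+1)/(n-1)\leq c_1\leq2$, i.e.\ $n\geq3$. A short case analysis — on whether $q_2$ is a prime power, and if so whether it shares the prime of $q_1$, using that a ratio $p_1^{1/e_1}/p_2^{1/e_2}$ of such radicals is rational only when $e_1=e_2=1$ — leaves only the near-miss $n=3$, $c_1=2$, $c_2=1$; but $c_1=2$ forces $q_1=2$, which with $n=3$ forces $q=8$, so $q_2=4$ and $c_2=\sqrt2\neq1$, a contradiction. (As a sanity check, $n=2$ is the trivial case $P_2(x)=(x-1)^3$, consistent with the relation reading $c_1=3c_2\geq3>2$.)

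The step I expect to be the main obstacle is precisely this passage from the $\varphi(q)$ conjugate equations to a single usable identity and the extraction of a contradiction from it: one must evaluate the sine-products correctly as powers of $\Phi_{q_m}(1)$ (carefully treating the cases $q\mid n\pm1$), and then notice that $(n-1)c_1=(n+1)c_2$ is rigid exactly because $c_1,c_2$ are pinned both by size (they lie in $[1,2]$) and by rationality (rational only in trivial cases). Playing these two constraints against the rational number $(n+1)/(n-1)$ is where the hypothesis $q\geq3$ — i.e.\ $\rho\neq\tfrac12$ — is consumed.
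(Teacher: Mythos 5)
Your proposal is correct, and after the first reduction it diverges genuinely from the paper. Both arguments begin the same way: your polynomial identity $P_n(\omega)=0$ and its regrouping into $(n-1)\sin((n+1)\pi\rho)=(n+1)\sin((n-1)\pi\rho)$ is exactly the paper's Lemma~\ref{calculation}. From there the paper proves a stronger, standalone statement (Lemma~\ref{galois}: \emph{any} rational ratio $\sin(k\pi\rho)/\sin(m\pi\rho)$ lies in $\{-1,0,1\}$) by explicitly constructing a $\mathbb{Q}$-basis of $\mathbb{Q}(\omega_{2q})$ --- built multiplicatively from real/imaginary parts at prime-power levels --- in which every $i\Im(\omega_{2q}^{\ell})$ has coordinates in $\{-1,0,1\}$, so that proportionality with a rational ratio forces the ratio into $\{-1,0,1\}$. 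You instead apply the Galois action to propagate the single identity to all conjugates $j/q$, multiply the $\varphi(q)$ equations (a norm computation), evaluate the sine products via $\prod_{\gcd(a,m)=1}2|\sin(\pi a/m)|=|\Phi_m(1)|$ together with the fiber count for $(\mathbb{Z}/q)^\times\to(\mathbb{Z}/q_m)^\times$, and finish with the arithmetic of $\Phi_m(1)$ (equal to $p$ for prime powers $p^k$, else $1$) and the irrationality of prime radicals. I checked the delicate points --- the exclusion of $q\mid n\pm1$, the product formula, the bounds $1\le c_i\le 2$ with equality only at $q_i=2$, and the terminal near-miss $n=3$, $q_1=2\Rightarrow q=8\Rightarrow c_2=\sqrt2$ --- and they all hold. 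The trade-off: the paper's basis construction yields a reusable classification of rational sine ratios at rational angles (of independent interest, and it is what makes the two-lemma structure clean), whereas your norm argument avoids constructing any basis, uses only standard cyclotomic facts, and is arguably more self-contained, but it is tailored to the specific coefficients $n\pm1$ and ends in a short case analysis rather than a general lemma.
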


It follows from two lemmas, which we state here.

\begin{lemma}\label{calculation}
Suppose $\rho\in(0,1)\cap\mathbb{Q}\setminus\{\frac{1}{2}\}$, then there exists an integer $n>1$ such that
\begin{equation}\label{identity1}
n\tan(\pi\rho)=\tan(n\pi\rho)
\end{equation}
if and only if
\begin{equation}\label{identity2}
\frac{\sin((n-1)\pi\rho)}{\sin((n+1)\pi\rho)}=\frac{n-1}{n+1}.
\end{equation}
\end{lemma}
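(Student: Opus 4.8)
The plan is to reduce both sides of \eqref{identity1} to a single polynomial-trigonometric identity by clearing denominators and applying product-to-sum formulas, all the while keeping careful track of which quantities are allowed to vanish. Write $\theta=\pi\rho$ for brevity. Since $\rho\in(0,1)\setminus\{\tfrac12\}$ we have $\cos\theta\neq0$, so $\tan\theta$ is defined; and for \eqref{identity1} even to be meaningful we must also have $\cos(n\theta)\neq0$.

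First I would handle the forward direction. Assume \eqref{identity1} holds for some $n>1$. Because $\cos\theta$ and $\cos(n\theta)$ are nonzero, \eqref{identity1} is equivalent to $n\sin\theta\cos(n\theta)=\cos\theta\sin(n\theta)$. Applying $\sin\theta\cos(n\theta)=\tfrac12\bigl(\sin((n+1)\theta)-\sin((n-1)\theta)\bigr)$ and $\cos\theta\sin(n\theta)=\tfrac12\bigl(\sin((n+1)\theta)+\sin((n-1)\theta)\bigr)$ and simplifying yields
\[
(n-1)\sin((n+1)\theta)=(n+1)\sin((n-1)\theta).
\]
To pass from here to \eqref{identity2} I must rule out $\sin((n+1)\theta)=0$: if it vanished, the displayed equation would force $\sin((n-1)\theta)=0$ as well, so both $(n+1)\rho$ and $(n-1)\rho$ would be integers, hence $2\rho\in\mathbb{Z}$, contradicting $\rho\in(0,1)\setminus\{\tfrac12\}$. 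Dividing by $(n+1)\sin((n+1)\theta)$ then gives \eqref{identity2}.

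For the converse, assume \eqref{identity2}. The presence of $\sin((n+1)\theta)$ in the denominator already guarantees $\sin((n+1)\theta)\neq0$, and I claim $\cos(n\theta)\neq0$ as well, so that \eqref{identity1} is well-posed: if $\cos(n\theta)=0$ then $\sin(n\theta)=\pm1$, and the angle-addition formulas give $\sin((n\pm1)\theta)=\pm\cos\theta$ with the same sign, whence (using $\cos\theta\neq0$) the ratio in \eqref{identity2} equals $1$, which is impossible since $n>1$. With $\cos\theta\neq0$ and $\cos(n\theta)\neq0$ in hand, I would simply reverse the computation above: multiplying \eqref{identity2} out recovers $(n-1)\sin((n+1)\theta)=(n+1)\sin((n-1)\theta)$, the same two product-to-sum identities repackage this as $n\sin\theta\cos(n\theta)=\cos\theta\sin(n\theta)$, and dividing by $\cos\theta\cos(n\theta)$ produces \eqref{identity1}.

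The computation itself is elementary; the only genuine care required is bookkeeping the non-vanishing of $\cos\theta$, $\cos(n\theta)$, and $\sin((n+1)\theta)$, which is exactly where the hypotheses $\rho\in(0,1)$, $\rho\neq\tfrac12$, and $n>1$ are used. So the ``main obstacle'' is not a difficulty so much as a discipline: ensuring each division is legitimate and that \eqref{identity2} is well-posed precisely when \eqref{identity1} is.
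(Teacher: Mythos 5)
Your proof is correct and follows essentially the same route as the paper's: both reduce \eqref{identity1} to $(n-1)\sin((n+1)\pi\rho)=(n+1)\sin((n-1)\pi\rho)$ (you via product-to-sum identities, the paper via complex exponentials) and then use $\rho\neq\tfrac{1}{2}$ to rule out the simultaneous vanishing of $\sin((n-1)\pi\rho)$ and $\sin((n+1)\pi\rho)$. Your additional verification that $\cos(n\pi\rho)\neq0$ in the converse direction, so that \eqref{identity1} is well-posed, is a point the paper leaves implicit and is a welcome bit of extra care.
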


\begin{lemma}\label{galois}
If $\rho\in(0,1)\cap\mathbb{Q}\setminus\{\frac{1}{2}\}$ and $k,m\in\mathbb{Z}$ are such that $\sin(m\pi\rho)\neq0$, then
$$
\frac{\sin(k\pi\rho)}{\sin(m\pi\rho)}
$$
is either $-1$, $0$, $1$ or irrational.
\end{lemma}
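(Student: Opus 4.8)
The plan is to move into a cyclotomic field and exploit Galois invariance. Write $\rho=p/q$ in lowest terms; since $\rho\in(0,1)$ and $\rho\neq\tfrac12$ we have $q\geq3$. Put $\omega=e^{2\pi i p/q}$, a primitive $q$-th root of unity, and set $r=\sin(k\pi\rho)/\sin(m\pi\rho)$. From $\sin^{2}x=\tfrac12(1-\cos 2x)$ and $\omega^{t}+\omega^{-t}=2\cos(2t\pi\rho)$ we obtain
$$
r^{2}=\frac{1-\cos(2k\pi\rho)}{1-\cos(2m\pi\rho)}=\frac{2-\omega^{k}-\omega^{-k}}{2-\omega^{m}-\omega^{-m}}=\frac{(1-\omega^{k})(1-\omega^{-k})}{(1-\omega^{m})(1-\omega^{-m})}\in\mathbb{Q}(\omega),
$$
the denominator being nonzero precisely because $\sin(m\pi\rho)\neq0$, i.e. $q\nmid m$. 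If $\sin(k\pi\rho)=0$ then $r=0$ and we are done, so we may also assume $q\nmid k$.

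Suppose now $r\in\mathbb{Q}$. Then $r^{2}\in\mathbb{Q}$ is fixed by every element of $\operatorname{Gal}(\mathbb{Q}(\omega)/\mathbb{Q})\cong(\mathbb{Z}/q\mathbb{Z})^{\times}$; writing $\sigma_{j}(\omega)=\omega^{j}$ and applying $\sigma_{j}$ to the identity above gives
$$
r^{2}=\frac{2-\omega^{jk}-\omega^{-jk}}{2-\omega^{jm}-\omega^{-jm}}=\frac{\sin^{2}(jk\pi\rho)}{\sin^{2}(jm\pi\rho)}\qquad\text{for every }j\text{ coprime to }q,
$$
each denominator being nonzero since $q\nmid jm$. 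Multiplying the relations $\sin^{2}(jk\pi\rho)=r^{2}\sin^{2}(jm\pi\rho)$ over all $j\in(\mathbb{Z}/q\mathbb{Z})^{\times}$ turns the two sides into products over a Galois orbit that evaluate to cyclotomic-polynomial values: using $\prod_{j}(1-\omega^{jk})=\Phi_{A}(1)^{\varphi(q)/\varphi(A)}$, where $A=q/\gcd(k,q)\geq2$ is the order of $\omega^{k}$ and $\Phi_{A}$ is the $A$-th cyclotomic polynomial, one gets
$$
\prod_{j\in(\mathbb{Z}/q\mathbb{Z})^{\times}}\sin^{2}(jk\pi\rho)=\frac{\Phi_{A}(1)^{2\varphi(q)/\varphi(A)}}{4^{\varphi(q)}},
$$
and likewise for $m$ with $B:=q/\gcd(m,q)\geq2$. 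Dividing, the factors $4^{\varphi(q)}$ cancel and we are left with
$$
|r|=\frac{\Phi_{A}(1)^{1/\varphi(A)}}{\Phi_{B}(1)^{1/\varphi(B)}}.
$$

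It now becomes a purely arithmetic matter to see that this forces $|r|\in\{0,1\}$, hence $r\in\{-1,0,1\}$. The relevant input is the classical evaluation $\Phi_{N}(1)=\ell$ when $N$ is a power of the prime $\ell$ and $\Phi_{N}(1)=1$ otherwise (for $N\geq2$); so each of the numerator and denominator above is either $1$ or of the form $\ell^{1/\varphi(\ell^{e})}$, and $\ell^{1/\varphi(\ell^{e})}$ is irrational unless $\ell^{e}=2$. A short $\ell$-adic valuation argument then shows that a quotient of two such numbers is rational only when the numerator and denominator coincide, i.e. $|r|=1$. I expect the crux of the proof — the place where a careful argument is genuinely needed — to be exactly this last step, and in particular the boundary configurations in which $A$ or $B$ equals $2$: these occur precisely when $q$ is even and $k$ (resp. $m$) is congruent to $q/2$ modulo $q$, so that $\sin(k\pi\rho)$ (resp. $\sin(m\pi\rho)$) equals $\pm1$, and here the norm identity by itself does not pin down $|r|$, so one must argue directly from the original values of $\sin(k\pi\rho)$ and $\sin(m\pi\rho)$.
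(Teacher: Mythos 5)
Your route is genuinely different from the paper's. The paper constructs an explicit $\mathbb{Q}$-basis of $\mathbb{Q}(\omega_{2q})$ in which every vector $i\Im(\omega_{2q}^{t})$ is claimed to have coordinates in $\{-1,0,1\}$, and then applies the elementary Lemma~\ref{linind}; you instead take the product of the relation $\sin^{2}(jk\pi\rho)=r^{2}\sin^{2}(jm\pi\rho)$ over the Galois orbit and reduce everything to the classical values $\Phi_{N}(1)$. Your computation is correct up to and including the identity $|r|=\Phi_{A}(1)^{1/\varphi(A)}/\Phi_{B}(1)^{1/\varphi(B)}$. But the step you defer --- that such a quotient is rational only when numerator and denominator coincide --- is a genuine gap, and it cannot be closed: when exactly one of $A,B$ equals $2$ and the other is not a prime power, the quotient equals $2^{\pm1}$, which is rational and is actually attained. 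Concretely, take $\rho=1/6$, $k=3$, $m=1$, so that $A=2$, $B=6$, and
$$
\frac{\sin(k\pi\rho)}{\sin(m\pi\rho)}=\frac{\sin(\pi/2)}{\sin(\pi/6)}=2.
$$
So the boundary configuration you flagged is not merely delicate; it is a counterexample, and Lemma~\ref{galois} as stated is false. What your argument genuinely proves is that a rational value of the ratio lies in $\{0,\pm1,\pm2,\pm\tfrac12\}$, the values $\pm2,\pm\tfrac12$ occurring only when one of the two sines is $\pm1$ and the other is $\pm\tfrac12$.

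For comparison, the paper's own proof breaks at exactly the spot your norm computation isolates. In Lemma~\ref{primepower} the expansion of $1=\Re(\omega_{p^{k}}^{0})$ in the proposed basis is $-2\sum_{r=1}^{(p-1)/2}\Re(\omega_{p^{k}}^{rp^{k-1}})$ (the conjugate pairs in the geometric-series relation double the real-part coefficients), so the ``Moreover'' clause already fails for $t\equiv0$; this propagates to Lemma~\ref{general}, where for instance $i\Im(\omega_{12}^{3})=i$ has coordinate $-2$ in the basis $D_{12}$. Theorem~\ref{mainthm} itself survives: in its proof the ratio must equal $(n-1)/(n+1)\in(0,1)$, so the only newly admissible value is $\tfrac12$, forcing $n=3$; but $\sin(2\pi\rho)/\sin(4\pi\rho)=1/(2\cos(2\pi\rho))$ equals $\tfrac12$ only if $\cos(2\pi\rho)=1$, impossible for $\rho\in(0,1)$. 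So if you carry out the $\ell$-adic analysis of $\Phi_{A}(1)^{1/\varphi(A)}/\Phi_{B}(1)^{1/\varphi(B)}$ to its correct conclusion and then dispose of the exceptional values $\pm2,\pm\tfrac12$ directly in the application, your approach yields a correct proof of the main theorem where the paper's does not.
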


\begin{proof}[Proof of Theorem~\ref{mainthm}]
By Lemma~\ref{calculation}, any such $n,\rho$ would have to satisfy (\ref{identity2}).  Since $n>1$ we know
$$
\frac{n-1}{n+1}\notin\{-1,0,1\},
$$
so the pair $k:=n-1$, $m:=n+1$ contradicts Lemma~\ref{galois} (that $\sin((n+1)\pi\rho)\neq0$ follows from (\ref{identity2})).
\end{proof}

\section{Proof of Lemma~\ref{calculation}}
For $z\in\mathbb{C}\setminus\big\{\frac{(2k+1)\pi}{2}:k\in\mathbb{Z}\big\}$,
$$
\tan(z)=\frac{i(e^{-iz}-e^{iz})}{e^{-iz}+e^{iz}}.
$$
By assumption $\left|\tan(\pi\rho)\right|<\infty$, so if $n$ satisfies (\ref{identity1}) then $\left|\tan(n\pi\rho)\right|<\infty$.  So our original equation can be rewritten as
$$
n\frac{i(e^{-i\pi\rho}-e^{i\pi\rho})}{e^{-i\pi\rho}+e^{i\pi\rho}}=\frac{i(e^{-in\pi\rho}-e^{in\pi\rho})}{e^{-in\pi\rho}+e^{in\pi\rho}}.
$$
This reduces to
$$
ni(e^{-i\pi\rho}-e^{i\pi\rho})(e^{-in\pi\rho}+e^{in\pi\rho})=i(e^{-in\pi\rho}-e^{in\pi\rho})(e^{-i\pi\rho}+e^{i\pi\rho}),
$$
which further simplifies to
\begin{equation}\label{identity3}
(n-1)\big(e^{-i(n+1)\pi\rho}-e^{i(n+1)\pi\rho}\big)=(n+1)\big(e^{-i(n-1)\pi\rho}-e^{i(n-1)\pi\rho}\big).
\end{equation}
Since $n>1$ we know $(n-1)(n+1)\neq0$, so if $e^{-i(n+1)\pi\rho}-e^{i(n+1)\pi\rho}=0$ then $e^{-i(n-1)\pi\rho}-e^{i(n-1)\pi\rho}=0$.  But if $e^{-i(n-1)\pi\rho}=e^{i(n-1)\pi\rho}$, then
$$
e^{-i(n+1)\pi\rho}=e^{-i(n-1)\pi\rho}e^{-2\pi i\rho}=e^{i(n-1)\pi\rho}e^{-2\pi i\rho}\neq e^{i(n-1)\pi\rho}e^{2\pi i\rho}=e^{i(n+1)\pi\rho}
$$
which is a contradiction (here we used $\rho\neq\frac{1}{2}$).  So we can divide in (\ref{identity3}) to get
$$
\frac{n-1}{n+1}=\frac{e^{-i(n-1)\pi\rho}-e^{i(n-1)\pi\rho}}{e^{-i(n+1)\pi\rho}-e^{i(n+1)\pi\rho}}=\frac{\big(\frac{e^{i(n-1)\pi\rho}-e^{-i(n-1)\pi\rho}}{2i}\big)}{\big(\frac{e^{i(n+1)\pi\rho}-e^{-i(n+1)\pi\rho}}{2i}\big)}=\frac{\sin((n-1)\pi\rho)}{\sin((n+1)\pi\rho)}.
$$
$\hfill\square$

\section{Proof of Lemma~\ref{galois}}

In this section we set $\omega_n:=e^{2\pi i/n}$ to be a primitive $n^{th}$ root of unity and $\mathbb{Q}(\omega_n)$ the $n^{th}$ cyclotomic field.  We need the following two well-known facts (see~\cite{W}, for example):
\begin{enumerate}
\item $\left|\mathbb{Q}(\omega_n):\mathbb{Q}\right|=\phi(n)$ (Euler's $\phi$-function);
\item $\mathbb{Q}(\omega_n)=Span\{1,\omega_n,\omega_n^2,\dots,\omega_n^{n-1}\}$.
\end{enumerate}

\subsection{Basic Strategy of Proof}

We want to show that if $\rho=\frac{p}{q}$ and $\sin(k\pi\rho)=\lambda\sin(m\pi\rho)$ then $\lambda\in\{-1,0,1\}$.  Since $\sin(k\pi\rho), \sin(m\pi\rho)\in\mathbb{Q}(\omega_{2q})$, our main tool is the following simple (but useful) observation.

\begin{lemma}\label{linind}
Let $\mathcal{B}$ be a basis for an $n$-dimensional $\mathbb{Q}$-vector space $V$ and suppose $u, v\in V$ are vectors whose coordinates (relative to $\mathcal{B}$) all come from the set $\{-1,0,1\}$.  If $u=\lambda v$ for some $\lambda\in\mathbb{Q}$, then $\lambda\in\{-1,0,1\}$.
\end{lemma}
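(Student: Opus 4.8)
The plan is to reduce everything to a coordinate computation. Fix an enumeration $\mathcal{B}=\{b_1,\dots,b_n\}$ and write $u=\sum_{i=1}^{n}u_ib_i$ and $v=\sum_{i=1}^{n}v_ib_i$ with all $u_i,v_i\in\{-1,0,1\}$. Because $\mathcal{B}$ is a basis, the identity $u=\lambda v$ is equivalent to the $n$ scalar equations $u_i=\lambda v_i$ for $i=1,\dots,n$. From here the statement is essentially immediate, the only point to watch being the degenerate case in which $v$ is the zero vector.

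First I would handle $v=0$: then $u=\lambda v=0$ as well, and the equation $u=\lambda v$ imposes no condition on $\lambda$ at all, so strictly speaking the conclusion fails in this degenerate case. In the application it does not occur — there $v$ corresponds to $\sin(m\pi\rho)$ and the standing hypothesis $\sin(m\pi\rho)\neq0$ rules it out — so one should either add $v\neq0$ to the statement or simply assume it. Granting $v\neq0$, choose an index $j$ with $v_j\neq0$; by hypothesis $v_j\in\{-1,1\}$. The $j$-th scalar equation then gives $\lambda=u_j/v_j=v_ju_j$, and since $u_j\in\{-1,0,1\}$ and $v_j\in\{-1,1\}$, the product $v_ju_j$ again lies in $\{-1,0,1\}$. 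That is exactly the desired conclusion, and note that it does not even use the assumption $\lambda\in\mathbb{Q}$ — that is automatic from $\lambda=u_j/v_j$.

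If one prefers to exploit the hypothesis $\lambda\in\mathbb{Q}$ more directly, an equivalent route is a divisibility argument: write $\lambda=a/b$ in lowest terms with $b\ge1$, so that $bu_i=av_i$ for every $i$; since $\gcd(a,b)=1$ this forces $b\mid v_i$, hence $b=1$ whenever $v_i\neq0$. Thus, provided $v\neq0$, $\lambda=a\in\mathbb{Z}$, and picking $i$ with $v_i\neq0$ yields $|\lambda|=|u_i|\le1$, so $\lambda\in\{-1,0,1\}$.

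I do not expect any genuine obstacle in proving the lemma itself — it is a one-line piece of linear algebra. The only subtlety is the bookkeeping around $v=0$, which must be addressed (or excluded) because for $v=0$ the hypothesis $u=\lambda v$ is met by every $\lambda$. The real content of the section is downstream: to apply the lemma with $V=\mathbb{Q}(\omega_{2q})$, $u=\sin(k\pi\rho)$, $v=\sin(m\pi\rho)$, one must pass from the spanning set $\{1,\omega_{2q},\dots,\omega_{2q}^{2q-1}\}$ (which has $2q$ elements but spans a space of dimension $\phi(2q)<2q$) to an honest basis relative to which both sines still have all their coordinates in $\{-1,0,1\}$. That is where the arithmetic of cyclotomic fields enters, but it is a separate matter from the statement at hand.
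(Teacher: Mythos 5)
Your proof is correct and is the natural coordinate-wise argument; the paper in fact offers no proof at all for this lemma, calling it a ``simple (but useful) observation,'' so there is nothing to diverge from. Your remark about the degenerate case $v=0$ is a fair catch --- as literally stated the lemma fails there, since $u=\lambda v=0$ holds for every $\lambda$ --- but, as you note, the application always has $v\neq 0$ because $\sin(m\pi\rho)\neq 0$ is a standing hypothesis.
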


We will prove Lemma~\ref{galois} by (explicitly) constructing a basis $\mathcal{B}$ for $\mathbb{Q}(\omega_{2q})$ in which, for every integer $\ell$, $i\Im(\omega_{2q}^{\ell})$ is of the type described in Lemma~\ref{linind}.

\subsection{Motivating Case}

This subsection is not necessary to prove Lemma~\ref{galois}, but is included to demonstrate the main idea of the proof.  Here we will prove Lemma~\ref{galois} in the special case that the denominator of $\rho$ is an odd prime (see Corollary~\ref{cor1}).

\begin{lemma}\label{prime}
If $n$ is an odd prime and we define
\begin{eqnarray*}
A_n&:=&\{\phantom{i}\Re(\omega_n), \phantom{i}\Re(\omega_n^2), \dots, \phantom{i}\Re(\omega_n^{(n-1)/2})\} \\
B_n&:=&\{i\Im(\omega_n), i\Im(\omega_n^2), \dots, i\Im(\omega_n^{(n-1)/2})\},
\end{eqnarray*}
then $A_n\cup B_n$ is a basis for $\mathbb{Q}(\omega_n)$ over $\mathbb{Q}$.
\end{lemma}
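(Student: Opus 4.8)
The plan is a dimension count followed by a spanning argument. First I would note that, by fact (1) and the primality of $n$, we have $[\mathbb{Q}(\omega_n):\mathbb{Q}] = \phi(n) = n-1$, while the set $A_n\cup B_n$ has $\frac{n-1}{2}+\frac{n-1}{2} = n-1$ elements (it is the oddness of $n$ that makes $\frac{n-1}{2}$ an integer). Consequently it suffices to prove that $A_n\cup B_n$ spans $\mathbb{Q}(\omega_n)$ over $\mathbb{Q}$: linear independence, and indeed the fact that the listed numbers are pairwise distinct, then come for free, since a spanning set of an $(n-1)$-dimensional space must have at least $n-1$ elements.

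To prove spanning, I would use that $\overline{\omega_n^j} = \omega_n^{-j} = \omega_n^{\,n-j}$ to write, for each $j$,
$$
\Re(\omega_n^j) = \frac{\omega_n^j + \omega_n^{\,n-j}}{2}, \qquad i\Im(\omega_n^j) = \frac{\omega_n^j - \omega_n^{\,n-j}}{2},
$$
so that adding and subtracting shows $\omega_n^j$ and $\omega_n^{\,n-j}$ both lie in the $\mathbb{Q}$-span of $\{\Re(\omega_n^j),\, i\Im(\omega_n^j)\}$. As $j$ ranges over $1,\dots,\frac{n-1}{2}$, the pair of exponents $\{j,\,n-j\}$ sweeps out $\{1,\dots,\frac{n-1}{2}\}\cup\{\frac{n+1}{2},\dots,n-1\} = \{1,2,\dots,n-1\}$ (again using only that $n$ is odd). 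Hence the $\mathbb{Q}$-span of $A_n\cup B_n$ contains $\{\omega_n,\omega_n^2,\dots,\omega_n^{\,n-1}\}$.

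Finally I would invoke fact (2): $\{1,\omega_n,\dots,\omega_n^{\,n-1}\}$ spans $\mathbb{Q}(\omega_n)$. Because $n$ is prime, the minimal polynomial of $\omega_n$ is $1 + x + \cdots + x^{\,n-1}$, so $1 = -(\omega_n + \omega_n^2 + \cdots + \omega_n^{\,n-1})$ already lies in the $\mathbb{Q}$-span of $\{\omega_n,\dots,\omega_n^{\,n-1}\}$; thus that set alone spans $\mathbb{Q}(\omega_n)$, and by the previous paragraph so does $A_n\cup B_n$. Combined with the dimension count, $A_n\cup B_n$ is a basis. The only essential use of primality is this last move --- deleting $1$ from the standard spanning set $\{1,\omega_n,\dots,\omega_n^{\,n-1}\}$ --- and this is precisely what breaks down for a composite denominator; so while the present lemma is not itself hard, the real work lies in finding the analogue of $A_n\cup B_n$ for a general cyclotomic field $\mathbb{Q}(\omega_{2q})$ in which every $i\Im(\omega_{2q}^{\ell})$ has coordinates in $\{-1,0,1\}$, which I expect to be the genuine obstacle in the full proof of Lemma~\ref{galois}.
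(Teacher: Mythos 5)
Your proof is correct and follows essentially the same route as the paper: a count showing $|A_n\cup B_n|=n-1=\phi(n)$, the observation that $\Re(\omega_n^j)\pm i\Im(\omega_n^j)$ recovers $\omega_n^{\pm j}$ so the span contains all $\omega_n^k$ with $1\le k\le n-1$, and the geometric-series identity $1=-(\omega_n+\cdots+\omega_n^{n-1})$ to handle $1$. Your closing remark about where primality is really used also matches the paper, which devotes the rest of Section 4 to exactly that generalization.
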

\begin{proof}
First note that $\omega_n^k\in Span(A_n)$ for every $k=1,2,\dots,n$ (sum the geometric series $\omega_n+\omega_n^2+\cdots+\omega_n^{n-1}=-1$ to see the $k=n$ case).  Next $\left|A_n\cup B_n\right|=n-1=\phi(n)$ (since $n$ is prime).  Any spanning set with $\phi(n)$ elements is a basis.
\end{proof}

Since $\left|\mathbb{Q}(\omega_{2n}):\mathbb{Q}\right|=\phi(2n)=\phi(n)=\left|\mathbb{Q}(\omega_n):\mathbb{Q}\right|$, Lemma~\ref{prime} also produces a basis for $\mathbb{Q}(\omega_{2n})$.

\begin{corollary}[Lemma~\ref{galois} for an odd prime]\label{cor1}
If $n$ is an odd prime and $k_1, k_2\in\mathbb{Z}$ are such that $\sin\big(\frac{k_2\pi}{n}\big)\neq0$, then
\begin{equation}\label{calc}
\frac{\sin\big(\frac{k_1\pi}{n}\big)}{\sin\big(\frac{k_2\pi}{n}\big)}
\end{equation}
is either $-1, 0, 1$ or irrational.
\end{corollary}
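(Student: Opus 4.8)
The plan is to realize both $i\sin(k_1\pi/n)$ and $i\sin(k_2\pi/n)$ as elements of the $\mathbb{Q}$-vector space $\mathbb{Q}(\omega_n)$ whose coordinates, relative to the basis $A_n\cup B_n$ of Lemma~\ref{prime}, all lie in $\{-1,0,1\}$, and then to invoke Lemma~\ref{linind}. First I would record the elementary reduction: since $\sin(k_1\pi/n)$ and $\sin(k_2\pi/n)$ are real and the latter is nonzero, the ratio (\ref{calc}) is a well-defined real number, so it suffices to show that if it is rational then it belongs to $\{-1,0,1\}$. Writing $\lambda$ for this (assumed rational) value, the hypothesis becomes the identity $i\sin(k_1\pi/n)=\lambda\cdot i\sin(k_2\pi/n)$, which will live in $\mathbb{Q}(\omega_n)$ once we check that each side does.

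Next I would identify $i\sin(k\pi/n)$ explicitly. Since $\omega_{2n}^{k}=e^{ik\pi/n}$ we have $i\sin(k\pi/n)=i\Im(\omega_{2n}^{k})=\frac{1}{2}(\omega_{2n}^{k}-\omega_{2n}^{-k})$, and because $n$ is odd, $\omega_{2n}=-\omega_n^{(n+1)/2}\in\mathbb{Q}(\omega_n)$, so this is indeed an element of $\mathbb{Q}(\omega_n)$. The core of the argument is then a bookkeeping step: reduce $k$ modulo $2n$; if the reduced value is $0$ or $n$ the sine vanishes; otherwise, using $\sin((k+n)\pi/n)=-\sin(k\pi/n)$ one reduces to an argument $j\in\{1,\dots,n-1\}$, and using $\Im(\omega_n^{j})=-\Im(\omega_n^{n-j})$ one reduces further to $j\in\{1,\dots,(n-1)/2\}$. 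Splitting on the parity of the surviving index and using $e^{ik\pi/n}=-\omega_n^{(k+n)/2}$ when $k$ is odd, one concludes that for every integer $k$,
$$
i\sin(k\pi/n)=\varepsilon\, b
$$
for some $\varepsilon\in\{-1,0,1\}$ and some $b\in B_n$ (reading the right-hand side as $0$ when $\varepsilon=0$). In particular $i\sin(k_1\pi/n)$ and $i\sin(k_2\pi/n)$ are vectors each of which is $0$ or $\pm$ a single basis vector of $A_n\cup B_n$, so in either case all of their coordinates lie in $\{-1,0,1\}$.

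Finally I would apply Lemma~\ref{linind} with $V=\mathbb{Q}(\omega_n)$, $\mathcal{B}=A_n\cup B_n$, $u=i\sin(k_1\pi/n)$, $v=i\sin(k_2\pi/n)$ (nonzero since $\sin(k_2\pi/n)\neq0$), and the rational scalar $\lambda$: it forces $\lambda\in\{-1,0,1\}$, which completes the proof. The only real subtlety is the index bookkeeping in the middle step — keeping careful track of the parity of $k$ and the oddness of $n$ so that the reduction to a single element of $B_n$ (up to sign) is valid in every case — and the one fact I would want stated cleanly in advance is that $A_n\cup B_n$ is a basis not merely of $\mathbb{Q}(\omega_n)$ but of $\mathbb{Q}(\omega_{2n})=\mathbb{Q}(\omega_n)$, which is precisely the remark following Lemma~\ref{prime}.
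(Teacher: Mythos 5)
Your proposal is correct and follows the paper's own route: realize each $i\sin(k_j\pi/n)$ as $\pm$ a single element of the basis $A_n\cup B_n$ of Lemma~\ref{prime} (viewed as a basis of $\mathbb{Q}(\omega_{2n})=\mathbb{Q}(\omega_n)$) and then apply Lemma~\ref{linind}. The only difference is that you spell out the parity/index bookkeeping that the paper compresses into the bare assertion that suitable exponents $\tilde{k}_1,\tilde{k}_2$ exist, and you explicitly cover the degenerate case $\sin(k_1\pi/n)=0$.
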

\begin{proof}
There exists $0<\tilde{k}_1\leq\tilde{k}_2\leq\frac{n-1}{2}$ such that
\begin{eqnarray*}
\sin\Big(\frac{k_1\pi}{n}\Big)&=&\pm\Im(\omega_{2n}^{\tilde{k}_1}) \\
\sin\Big(\frac{k_2\pi}{n}\Big)&=&\pm\Im(\omega_{2n}^{\tilde{k}_2}).
\end{eqnarray*}
If~\ref{calc} is a rational number, say $i\Im(\omega_{2n}^{\tilde{k}_1})=i\lambda\Im(\omega_{2n}^{\tilde{k}_2})$ for some $\lambda\in\mathbb{Q}$, then $\lambda\in\{-1,0,1\}$ (by Lemma~\ref{linind}).
\end{proof}

\subsection{Prime Powers}

Here we generalize Lemma~\ref{prime} to the case where the denominator of $\rho$ is a prime power.

\begin{lemma}\label{primepower}
If $n=p^k$ is an odd prime power and we define
\begin{eqnarray*}
A_{p^k}&:=&\{\phantom{i}\Re(\omega_n^t):1\leq t\leq\phi(n)/2\} \\
B_{p^k}&:=&\{i\Im(\omega_n^t):1\leq t\leq\phi(n)/2\},
\end{eqnarray*}
then $A_n\cup B_n$ is a basis for $\mathbb{Q}(\omega_n)$ over $\mathbb{Q}$.  Moreover for any integer $t$,  all coefficients of the vectors $\Re(\omega_n^t)$ and $i\Im(\omega_n^t)$ (with respect to this basis) are contained in the set $\{-1,0,1\}$.
\end{lemma}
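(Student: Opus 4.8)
The plan is to deduce both parts of the statement from one explicit computation of the coordinate vector of an arbitrary power $\omega_n^t$.  Write $n=p^k$; since $p$ is odd, $\phi(n)=p^{k-1}(p-1)$ is even, so $\phi(n)/2$ is an integer and $|A_n\cup B_n|=\phi(n)=|\mathbb{Q}(\omega_n):\mathbb{Q}|$.  Thus $A_n\cup B_n$ will be a basis as soon as it spans, and since $\mathbb{Q}(\omega_n)$ is spanned by $1,\omega_n,\dots,\omega_n^{n-1}$ it suffices to put every $\omega_n^t$ into $\mathrm{Span}_{\mathbb{Q}}(A_n\cup B_n)$.  For the ``moreover'' clause I would use that complex conjugation is a $\mathbb{Q}$-linear automorphism of $\mathbb{Q}(\omega_n)$ fixing each element of $A_n$ and negating each element of $B_n$: consequently, if the coordinates of $\omega_n^t$ all lie in $\{-1,0,1\}$, then so do the coordinates of $\Re(\omega_n^t)=\tfrac12(\omega_n^t+\overline{\omega_n^t})$ (which are just the $A_n$-entries of that vector, the remaining ones being $0$) and of $i\Im(\omega_n^t)=\tfrac12(\omega_n^t-\overline{\omega_n^t})$ (just the $B_n$-entries).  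So the whole lemma reduces to the claim that, for every integer $t$ with $p^k\nmid t$, the coordinates of $\omega_n^t$ in $A_n\cup B_n$ lie in $\{-1,0,1\}$, plus the bare fact that $\omega_n^0=1$ lies in the span.

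To compute the coordinates of $\omega_n^t$ I would reduce $t$ modulo $n$ to $r\in\{0,1,\dots,n-1\}$.  If $1\le r\le\phi(n)/2$ then $\omega_n^t=\Re(\omega_n^r)+i\Im(\omega_n^r)$; if $n-\phi(n)/2\le r\le n-1$ then $\omega_n^t=\omega_n^{-(n-r)}=\Re(\omega_n^{n-r})-i\Im(\omega_n^{n-r})$ with $1\le n-r\le\phi(n)/2$; in both cases every coordinate is $0$ or $\pm1$.  The remaining case, the ``gap'' $\phi(n)/2<r<n-\phi(n)/2$, is the one I expect to be the real obstacle.  Here I would set $\zeta:=\omega_n^{p^{k-1}}$, so $\zeta^p=1$ and $\zeta\neq1$, whence $\sum_a\zeta^a=0$ with $a$ running over any complete residue system modulo $p$; taking the symmetric system $\{-\tfrac{p-1}{2},\dots,\tfrac{p-1}{2}\}$ (valid since $p$ is odd) and isolating the term $a=\tfrac{p-1}{2}$ gives $\zeta^{(p-1)/2}=-\sum_{a=-(p-1)/2}^{(p-3)/2}\zeta^a$.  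The gap condition forces $r=p^{k-1}\tfrac{p-1}{2}+r_0$ with $1\le r_0\le p^{k-1}-1$, so multiplying by $\omega_n^{r_0}$ yields
$$
\omega_n^r=-\sum_{a=-(p-1)/2}^{(p-3)/2}\omega_n^{\,r_0+ap^{k-1}}.
$$
Two elementary verifications then finish this case.  First, each exponent $e:=r_0+ap^{k-1}$ satisfies $e\neq0$ and $1\le|e|\le\phi(n)/2-1$ (a short estimate from $1\le r_0\le p^{k-1}-1$), so $\omega_n^e=\Re(\omega_n^{|e|})\pm i\Im(\omega_n^{|e|})$ is a combination of two basis vectors with coefficients in $\{-1,1\}$.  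Second, the $p-1$ exponents $e$ are pairwise distinct, and even pairwise distinct in absolute value, since $e(a_1)=-e(a_2)$ would force $p^{k-1}\mid 2r_0$, hence $p^{k-1}\mid r_0$ (as $p$ is odd), which is impossible.  No two of the $p-1$ summands therefore share a basis vector, so the coordinates of $\omega_n^r$ all lie in $\{-1,0,1\}$.

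To finish, the case $r=0$ of the same identity gives $1=-2\sum_{a=1}^{(p-1)/2}\Re(\omega_n^{ap^{k-1}})$, which exhibits $1$ as a $\mathbb{Q}$-combination of elements of $A_n$; together with the previous cases, every $\omega_n^t$ lies in $\mathrm{Span}_{\mathbb{Q}}(A_n\cup B_n)$, so the $\phi(n)$-element set $A_n\cup B_n$ is a basis.  The coefficient $-2$ here is exactly why the ``$\{-1,0,1\}$'' assertion genuinely requires $p^k\nmid t$ on the real side: when $p^k\mid t$ one has $\Re(\omega_n^t)=1$, which has a coordinate equal to $-2$; but then $i\Im(\omega_n^t)=0$, and since only the imaginary parts enter the proof of Lemma~\ref{galois}, nothing downstream is affected.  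Everything above is routine except the gap case, and inside it the only delicate point is verifying that the exponents $r_0+ap^{k-1}$ stay within $[-\phi(n)/2,\phi(n)/2]$ and never coincide in absolute value; note that for $k=1$ the gap is empty and the whole argument collapses to Lemma~\ref{prime}.
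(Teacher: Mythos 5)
Your proof is correct and follows essentially the same route as the paper: a cardinality count reduces everything to spanning, the powers $\omega_n^{\pm t}$ with $1\le t\le\phi(n)/2$ are handled directly, and the remaining ``gap'' exponents $\phi(n)/2<r<n-\phi(n)/2$ are reached via the vanishing sum $\sum_{a}\omega_n^{r_0+ap^{k-1}}=0$ --- though you are more explicit than the paper on the two points its proof leaves implicit, namely the exponent bounds and the check that no two summands hit the same basis vector (your $p^{k-1}\nmid 2r_0$ argument). Your further observation that the ``moreover'' clause literally fails for $\Re(\omega_n^t)=1$ when $p^k\mid t$ (its unique coordinate vector has entries $-2$), while $i\Im(\omega_n^t)=0$ is unaffected, is accurate and flags a genuine small overstatement in the lemma as printed, which the paper's own proof passes over with ``the second claim follows from the previous two paragraphs.''
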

\begin{proof}
The set $A_n\cup B_n$ contains exactly $\phi(n)=p^{k-1}(p-1)$ many elements so to prove the first claim it suffices to show that $Span\{A_n\cup B_n\}=\mathbb{Q}(\omega_n)$, or simply that $\omega_{p^k}^t\in Span\{A_n\cup B_n\}$ for every integer $t\in[0,p^k-1]$.

It is immediate that if $1\leq t\leq\frac{p^{k-1}(p-1)}{2}$, then $\omega_{p^k}^{\pm t}\in Span\{A_n\cup B_n\}$ (recall that $\Re(\omega_n^{-t})=\Re(\omega_n^t)$ and $\Im(\omega_n^{-t})=-\Im(\omega_n^t)$).  That is, the only integers $t\in[1,p^k-1]$ for which we have not yet verified that $\omega_{p^k}^t\in Span\{A_n\cup B_n\}$ are those satisfying 
$$
\frac{p^{k-1}(p-1)}{2}<t<\frac{p^{k-1}(p-1)}{2}+p^{k-1}.
$$
If $0<s<p^{k-1}$, then there is precisely one $r\in[0,(p-1)/2)$ so that 
$$
t_s:=rp^{k-1}+s+1\in\bigg(\frac{p^{k-1}(p-1)}{2},\frac{p^{k-1}(p-1)}{2}+p^{k-1}\bigg).
$$
On the other hand, for any $s$,
\begin{equation}\label{geometricseries}
\omega_{p^{k+1}}^s+\omega_{p^{k+1}}^{p^{k-1}+s}+\omega_{p^{k+1}}^{2p^{k-1}+s}+\cdots+\omega_{p^{k+1}}^{(p-1)p^{k-1}+s}=0
\end{equation}
(divide both sides by $\omega_{p^{k+1}}^s$ and sum the geometric series).  
In light of (\ref{geometricseries}) and our previous verification that $\omega_n^t\in Span\{A_n\cup B_n\}$ for every $t\equiv s$ (mod $p^{k-1}$) (except $t_s$ itself), we see $\omega_n^{t_s}\in Span\{A_n\cup B_n\}$.  It remains only to see that $1=\omega_n^0\in Span\{A_n\cup B_n\}$.  This follows because from (\ref{geometricseries}) with $s=0$ and the observation that $rp^{k-1}\notin\big(\frac{p^{k-1}(p-1)}{2},\frac{p^{k-1}(p-1)}{2}+p^{k-1}\big)$ for any integer $r$.

The second claim follows from the previous two paragraphs.
\end{proof}

\begin{lemma}\label{evenpower}
If $n=2^k$ and we define (for integers $t$)
\begin{eqnarray*}
A_{2^k}&:=&\{\phantom{i}\Re(\omega_n^t):0\leq t<2^{k-2}\} \\
B_{2^k}&:=&\{i\Im(\omega_n^t):0<t\leq2^{k-2}\},
\end{eqnarray*}
then $A_n\cup B_n$ is a basis for $\mathbb{Q}(\omega_n)$ over $\mathbb{Q}$.  Moreover for any integer $t$,  all coefficients of the vectors $\Re(\omega_n^t)$ and $i\Im(\omega_n^t)$ (with respect to this basis) are contained in the set $\{-1,0,1\}$.
\end{lemma}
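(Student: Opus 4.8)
The plan is to follow the template of Lemma~\ref{primepower}, exploiting that the $2$-power case is actually \emph{easier}: the two symmetries $\omega_n^{2^{k-1}}=-1$ and $\omega_n^{-t}=\overline{\omega_n^{t}}$ alone suffice to rewrite every power $\omega_n^{t}$ in terms of $A_n\cup B_n$, so no analogue of the geometric-series identity (\ref{geometricseries}) is needed. We may assume $k\ge2$ (for $k=1$ we have $\mathbb{Q}(\omega_2)=\mathbb{Q}$ and $A_2\cup B_2=\{1\}$, and both assertions are trivial). Since $A_n\cup B_n$ has at most $2^{k-2}+2^{k-2}=2^{k-1}=\phi(2^k)$ elements, it suffices -- exactly as in Lemma~\ref{primepower} -- to prove the spanning statement, and in fact I would prove the stronger claim that for every integer $t$ each of $\Re(\omega_n^{t})$ and $i\Im(\omega_n^{t})$ lies in $Span(A_n\cup B_n)$ with all coordinates in $\{-1,0,1\}$. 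This gives both conclusions at once: spanning, because $\omega_n^{t}=\Re(\omega_n^{t})+i\Im(\omega_n^{t})$ together with fact~(2) of this section; and the coordinate assertion, directly.

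To prove the stronger claim I would argue by cases on the residue of $t$ modulo $n$ (both $\Re(\omega_n^t)$ and $i\Im(\omega_n^t)$ depend only on $t\bmod n$), so take $0\le t\le n-1$. If $0\le t\le2^{k-2}$, then $\Re(\omega_n^{t})$ is either $0$ (when $t=2^{k-2}$, where $\omega_n^{t}=i$) or an element of $A_n$, while $i\Im(\omega_n^{t})$ is either $0$ (when $t=0$) or an element of $B_n$; so both have coordinates in $\{0,1\}$. If $2^{k-2}<t<2^{k-1}$, set $t':=2^{k-1}-t$, so $0<t'<2^{k-2}$, and compute
$$
\omega_n^{t}=\omega_n^{2^{k-1}}\,\omega_n^{-t'}=-\overline{\omega_n^{t'}}=-\Re(\omega_n^{t'})+i\Im(\omega_n^{t'});
$$
since $\Re(\omega_n^{t'})\in A_n$ and $i\Im(\omega_n^{t'})\in B_n$, again $\Re(\omega_n^{t})$ and $i\Im(\omega_n^{t})$ have coordinates in $\{-1,0,1\}$. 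If $t=2^{k-1}$ then $\omega_n^{t}=-1=-\Re(\omega_n^{0})$. Finally, if $2^{k-1}<t\le n-1$ then $\omega_n^{t}=-\omega_n^{\,t-2^{k-1}}$ with $0<t-2^{k-1}<2^{k-1}$, which reduces to one of the previous cases and only negates the coordinates, leaving them in $\{-1,0,1\}$. Having exhausted all residues, both the spanning and the coordinate claims follow.

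The only thing that needs care is the bookkeeping at the boundary indices $t=2^{k-2}$ (where the real part vanishes) and $t=0,2^{k-1}$ (where the imaginary part vanishes), and checking that the reflection $t\mapsto2^{k-1}-t$ used in the middle case genuinely lands in the already-handled range $(0,2^{k-2})$; none of this is delicate, so I expect the lemma to go through with no real obstacle. The remaining work -- assembling the $A$'s and $B$'s for the various prime powers dividing $2q$ into a single basis of $\mathbb{Q}(\omega_{2q})$ with the same $\{-1,0,1\}$ property -- is what I would turn to next in order to complete the proof of Lemma~\ref{galois}.
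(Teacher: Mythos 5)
Your proposal is correct and follows essentially the same route as the paper: count that $|A_n\cup B_n|=2^{k-1}=\phi(2^k)$, handle $0\le t\le 2^{k-2}$ directly, and obtain the remaining powers from the symmetry of the $2^k$-th roots of unity about the real and imaginary axes (which is exactly what your reflections $t\mapsto 2^{k-1}-t$ and $t\mapsto t-2^{k-1}$ implement), with the $\{-1,0,1\}$ coordinate claim falling out of the same computation. You simply spell out the case analysis that the paper leaves implicit.
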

\begin{proof}
As before, the number of elements in $A_{2^k}\cup B_{2^k}$ is $2^{k-1}=\phi(2^k)$ so it suffices to see that $\omega_n^t\in Span(A_{2^k}\cup B_{2^k})$ for every $0\leq t<2^k$.

It is immediate that $\omega_n^t\in Span(A_{2^k}\cup B_{2^k})$ for $0\leq t\leq2^{k-2}$ (note that $\Re(\omega_n^{2^{k-2}})=\Im(\omega_n^0)=0$).  This describes the set of all $(2^k)$-th roots of unity in the first quadrant in $\mathbb{C}$.  The set of all roots is symmetric about the real and imaginary axes, so $\omega_n^t\in Span(A_{2^k}\cup B_{2^k})$ for every $t$.

Again, the second claim is by construction.
\end{proof}


\subsection{General Result}

Finally we are ready to prove Lemma~\ref{galois} in full generality.  We begin with some notation: if $S_1, S_2, \dots, S_n$ are (nonempty) subsets of $\mathbb{C}$, define
$$
S_1S_2\cdots S_n:=\{\alpha_1\alpha_2\cdots\alpha_n\in\mathbb{C}:\alpha_i\in S_i\}.
$$
\begin{lemma}\label{general}
If $p_1^{e_1}p_2^{e_2}\cdots p_k^{e_k}$ is the prime factorization of an integer $n$ and we define, for $1\leq i\leq k$ and $0\leq j\leq 1$,
$$
D_i^j=\left\{\hspace{-0.05 in}\begin{tabular}{ll}$A_{q_i}$ & if $j=0$ \\ $B_{q_i}$ & if $j=1$\end{tabular}\right.
$$
and set
$$
D_n:=\bigcup_{(j_1,\dots,j_k)\in\{0,1\}^k}D_1^{j_1}\cdots D_k^{j_k},
$$
then $D_n$ is a basis for $\mathbb{Q}(\omega_n)$ over $\mathbb{Q}$.  Moreover for any integer $t$,  all coefficients of the vectors $\Re(\omega_n^t)$ and $i\Im(\omega_n^t)$ (with respect to this basis) are contained in the set $\{-1,0,1\}$.
\end{lemma}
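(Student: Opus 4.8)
The plan is to assemble the basis $D_n$ from the prime-power bases of Lemmas~\ref{primepower} and~\ref{evenpower}, exploiting the linear disjointness over $\mathbb{Q}$ of the cyclotomic fields $\mathbb{Q}(\omega_{q_i})$ attached to the pairwise coprime prime powers $q_i:=p_i^{e_i}$. Preliminarily, since $\phi(2m)=\phi(m)$ and $\mathbb{Q}(\omega_{2m})=\mathbb{Q}(\omega_m)$ for odd $m$ (indeed $\omega_{2m}^t=(-1)^t\omega_m^{t(m+1)/2}$), I may assume either $n$ is odd or $4\mid n$, so that each $q_i$ is covered by Lemma~\ref{primepower} or Lemma~\ref{evenpower}; in particular each $\mathbb{Q}(\omega_{q_i})$ carries a basis $A_{q_i}\cup B_{q_i}$ in which every $\Re(\omega_{q_i}^t)$ and every $i\Im(\omega_{q_i}^t)$ has all coordinates in $\{-1,0,1\}$.

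First I would record the standard linear-disjointness fact: if $K_1,\dots,K_k$ are number fields with $[K_1\cdots K_k:\mathbb{Q}]=\prod_i[K_i:\mathbb{Q}]$ and $\mathcal{B}_i$ is a $\mathbb{Q}$-basis of $K_i$, then $\{\beta_1\cdots\beta_k:\beta_i\in\mathcal{B}_i\}$ is a $\mathbb{Q}$-basis of $K_1\cdots K_k$ — it spans (the $\mathbb{Q}$-span of $\{x_1\cdots x_k:x_i\in K_i\}$ is a finite-dimensional subalgebra of a field, hence a field containing each $K_i$), and a spanning set of cardinality at most the dimension is a basis. I apply this with $K_i=\mathbb{Q}(\omega_{q_i})$, $\mathcal{B}_i=A_{q_i}\cup B_{q_i}$, using $\mathbb{Q}(\omega_n)=\mathbb{Q}(\omega_{q_1},\dots,\omega_{q_k})$ (each $\omega_{q_i}=\omega_n^{n/q_i}$, and conversely $\omega_n=\prod_i\omega_{q_i}^{u_i}$ for integers $u_i$ with $\sum_i u_i(n/q_i)=1$) together with $\phi(n)=\prod_i\phi(q_i)$. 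This shows that the set of all products of one element from each $A_{q_i}\cup B_{q_i}$ is a basis of $\mathbb{Q}(\omega_n)$; since selecting ``from $A_{q_i}$'' or ``from $B_{q_i}$'' is a choice of $j_i\in\{0,1\}$, that set is exactly $D_n$, and $|D_n|=\prod_i\phi(q_i)=\phi(n)$. Equality throughout the chain $\phi(n)=|D_n|\le\sum_{(j)}\prod_i|D_i^{j_i}|=\prod_i\phi(q_i)=\phi(n)$ then forces the blocks $D_1^{j_1}\cdots D_k^{j_k}$ to be pairwise disjoint and each of full size $\prod_i|D_i^{j_i}|$; equivalently, each element of $D_n$ has a unique factorization $\beta_1\cdots\beta_k$ with $\beta_i$ drawn from a prescribed one of $A_{q_i},B_{q_i}$. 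This settles the first assertion.

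For the coordinate assertion, I would first observe that, because the elements of $A_{q_i}$ are real and those of $B_{q_i}$ purely imaginary, the $\{-1,0,1\}$-expansion of the real number $\Re(\omega_{q_i}^t)$ in $A_{q_i}\cup B_{q_i}$ involves only $A_{q_i}$, and that of the purely imaginary $i\Im(\omega_{q_i}^t)$ only $B_{q_i}$. Fixing $t$ and putting $a_i:=tu_i$, $x_i:=\Re(\omega_{q_i}^{a_i})$, $y_i:=i\Im(\omega_{q_i}^{a_i})$, we have $\omega_{q_i}^{\pm a_i}=x_i\pm y_i$, hence $\omega_n^{\pm t}=\prod_i(x_i\pm y_i)$; multiplying out and using $\Re(\omega_n^t)=\frac{1}{2}(\omega_n^t+\omega_n^{-t})$ and $i\Im(\omega_n^t)=\frac{1}{2}(\omega_n^t-\omega_n^{-t})$, the halves cancel after collecting $\pm$ terms and one obtains
\[
\Re(\omega_n^t)=\sum_{\substack{\epsilon\in\{0,1\}^k\\ |\epsilon|\ \mathrm{even}}}\prod_{i=1}^k z_i(\epsilon_i),\qquad i\Im(\omega_n^t)=\sum_{\substack{\epsilon\in\{0,1\}^k\\ |\epsilon|\ \mathrm{odd}}}\prod_{i=1}^k z_i(\epsilon_i),
\]
with $|\epsilon|=\epsilon_1+\cdots+\epsilon_k$, $z_i(0):=x_i$, $z_i(1):=y_i$. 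Expanding each $z_i(\epsilon_i)$ in its basis $D_i^{\epsilon_i}$, the term $\prod_i z_i(\epsilon_i)$ becomes a combination — with coefficients that are products of elements of $\{-1,0,1\}$, hence in $\{-1,0,1\}$ — of the (distinct) basis vectors of the block $D_1^{\epsilon_1}\cdots D_k^{\epsilon_k}$. Since the blocks are disjoint, terms for different $\epsilon$ feed disjoint sets of basis vectors, so reading off coordinates against $D_n$ shows every coordinate of $\Re(\omega_n^t)$ and of $i\Im(\omega_n^t)$ lies in $\{-1,0,1\}$.

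The step I expect to be the main obstacle is the bookkeeping in the last paragraph: confirming that the multilinear expansion of $\prod_i(x_i\pm y_i)$ creates no overlap and no cancellation among basis vectors arising from different sign patterns $\epsilon$. This is precisely what the disjointness of the blocks provides, and that disjointness is in turn forced by the exact count $\sum_{(j)}\prod_i|D_i^{j_i}|=\prod_i\phi(q_i)=\phi(n)=\dim_{\mathbb{Q}}\mathbb{Q}(\omega_n)$. Everything else — linear disjointness of cyclotomic fields, multiplicativity of $\phi$, and the real/imaginary separation of coordinates — is routine. (Lemma~\ref{galois} then follows by writing $\sin(k\pi p/q)=\Im(\omega_{2q}^{kp})$, applying this result to $\mathbb{Q}(\omega_{2q})$, and invoking Lemma~\ref{linind}.)
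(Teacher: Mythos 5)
Your proposal is correct and follows essentially the same route as the paper: form $D_n$ as the products of the prime-power bases of Lemmas~\ref{primepower} and~\ref{evenpower}, write $\omega_n^{\pm t}$ as a product of $\omega_{q_i}^{\pm a_i}=x_i\pm y_i$ via the Chinese Remainder Theorem, and expand multilinearly so that each sign pattern feeds a distinct block of basis vectors with coefficients that are products of elements of $\{-1,0,1\}$. You simply supply details the paper leaves implicit (the linear-disjointness basis fact behind ``it is immediate that $D_n$ is a basis,'' the disjointness of the blocks, and the $n\equiv2\pmod 4$ edge case), which is a faithful filling-in rather than a different argument.
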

\begin{proof}
It is immediate that $D_n$ is a basis for $\mathbb{Q}(\omega_n)$, but it remains to see that all coefficients (with respect to the basis $D_n$) of the vectors $\Re(\omega_n^t)$ and $i\Im(\omega_n^t)$ are in the set $\{-1,0,1\}$.

Suppose $0\leq t<n$ and write
$$
t=s_1(n/q_1)+s_2(n/q_2)+\cdots+s_k(n/q_k),
$$
where $0\leq s_i<q_i$ for $i=1,2,\dots,k$.  Then $\omega_n^t=\omega_{q_1}^{s_1}\omega_{q_2}^{s_2}\cdots\omega_{q_k}^{s_k}$.  Write
\begin{eqnarray*}
\Re(\omega_{q_i}^{s_i})&=&\lambda_{i,1}\Re(\omega_{q_i})+\lambda_{i,2}\Re(\omega_{q_i}^2)+\cdots+\lambda_{i,\phi(q_i)/2}\Re(\omega_{q_i}^{\phi(q_i)/2}) \\
\Im(\omega_{q_i}^{s_i})&=&\mu_{i,1}\Im(\omega_{q_i})+\mu_{i,2}\Im(\omega_{q_i}^2)+\cdots+\mu_{i,\phi(q_i)/2}\Im(\omega_{q_i}^{\phi(q_i)/2}),
\end{eqnarray*}
where $\lambda_{i,j}, \mu_{i,j}\in\{-1,0,1\}$ (possible by Lemmas~\ref{primepower} and~\ref{evenpower}).  Then $\Re(\omega_n^t)$ and $i\Im(\omega_n^t)$ are both sums of expressions of the form
$$
\pm\delta_1\delta_2\cdots\delta_k\mathcal{G}_1\mathcal{G}_2\cdots\mathcal{G}_k,
$$
where $\mathcal{G}_i\in A_{q_i}\cup B_{q_i}$ and $\delta_i\in\{\lambda_{i,1},\dots,\lambda_{i,\phi(q_i)/2},\mu_{i,1},\dots,\mu_{i,\phi(q_i)/2}\}$.  The result follows.
\end{proof}
\begin{proof}[Proof of Lemma~\ref{galois}]
Let $\rho\in(0,1)\cap\mathbb{Q}\setminus\{\frac{1}{2}\}$ and suppose $k,m\in\mathbb{Z}$ are such that $\sin(m\pi\rho)\neq0$.  If $\sin(k\pi\rho)/\sin(m\pi\rho)=\lambda\in\mathbb{Q}$ then, by Lemmas~\ref{linind} and~\ref{general}, $\lambda\in\{-1,0,1\}$.
\end{proof}

\end{document}